\newcommand{\ignore}[1] {}
\newcommand{\vre}{\varepsilon}
\newtheorem{thm}{Theorem}
\newtheorem{prop}[thm]{Proposition}
\newtheorem{remark}[thm]{Remark}
\begin{document}

\title[Splitting fields]{Splitting fields of elements in arithmetic groups}

%    Information for first author
\author{Alexander Gorodnik}
\address{School of Mathematics \\ University of Bristol \\ Bristol, U.K.}
\email{a.gorodnik@bristol.ac.uk}
\thanks{The first author was supported in part by EPSRC, ERC, and RCUK}

%    Information for second author
\author{Amos Nevo}
%    Address of record for the research reported here
\address{Department of Mathematics, Technion, Haifa, Israel}
\email{anevo@tx.technion.ac.il}
%    \thanks will become a 1st page footnote.
\thanks{The second author was supported by ISF grant}

%    General info
%\subjclass{Primary ????????; Secondary ?????????}

\date{\today}

%\dedicatory{}

%\keywords{Semisimple groups, spherical functions}

\begin{abstract}
We prove that the number of unimodular integral $n\times n$ matrices in a norm ball whose characteristic polynomial has Galois group different than the full symmetric group $S_n$ is of strictly lower order of magnitude than the number of all such matrices in the ball, as the radius increases. 
More generally, we prove a similar result for the Galois groups associated with elements in any connected semisimple linear algebraic group defined and simple over a number field $F$.  Our method is based on the abstract large sieve method developed by Kowalski, and the study of Galois groups via reductions modulo primes 
developed by Jouve, Kowalski and Zywina. The two key ingredients are a uniform quantitative lattice 
point counting result, and a non-concentration phenomenon for lattice points in algebraic subvarieties of the group variety, both established previously by the authors.
The results answer a question posed by Rivin and by Jouve, Kowalski and Zywina, who have considered Galois groups of random products of elements in algebraic groups. 
\end{abstract}

\maketitle

\section{Introduction}

Let $P(x)=x^d+a_{1}x^{d-1}+\cdots + a_{d-1}x+a_d$ be an irreducible polynomial with integral coefficients.
We denote by $\mathbb{Q}_P$ the splitting field of $P$. 
Since the Galois group $\hbox{Gal}(\mathbb{Q}_P/\mathbb{Q})$
acts on the roots of $P(x)$, it can be realised as a subgroup of the
symmetric group $S_d$. P.~Gallagher has shown in \cite{g} that typically
 the Galois group is, in fact, isomorphic to symmetric group
$S_d$. Namely,
$$
\left|\left\{P(x):\,\, 
\begin{tabular}{c}
$\max\{|a_{1}|,\ldots, |a_d|\}\le T$\\
$\hbox{Gal}(\mathbb{Q}_P/\mathbb{Q})\simeq S_d$
\end{tabular}
\right\}\right|
=(2T+1)^d+O_d\left(T^{d-1/2}\log T\right).
$$
The goal of this paper is to establish an analogous result for 
Galois groups of splitting fields of elements in arithmetic groups.
Let us consider, for instance, $\Gamma=\hbox{SL}_d(\mathbb{Z})$.
We denote by $\mathbb{Q}_\gamma$ the field generated by the eigenvalues of $\gamma$
(or, equivalently,
the splitting field of the characteristic polynomial $\det(x\cdot \hbox{Id}-\gamma)$).
Let $\|\cdot\|$ be a norm on $\hbox{Mat}_d(\mathbb{R})$, and 
$N_T(\Gamma):=|\{\gamma\in\Gamma:\, \|\gamma\|\le T \}|$. Then our main result below
implies that
$$
|\{\gamma\in\Gamma:\, \|\gamma\|\le T,\, \hbox{Gal}(\mathbb{Q}_\gamma/\mathbb{Q})\simeq S_d\}|=
N_T(\Gamma)+ O_{d,\vre}\left(N_T(\Gamma)^{1-\delta_d+\vre}\right)
$$
for all $\vre>0$,
where $\delta_2=1/80$, $\delta_d=d^{-3}(6d^2-4)^{-1}$ for even $d$, and
$\delta_d=d^{-2}(d-1)^{-1}(6d^2-4)^{-1}$ for odd $d$.

More generally, our standing assumptions will be that  ${\sf G}\subset \hbox{GL}_m$ is a connected semisimple algebraic group defined over a 
number field $F$, and that  $\sf G$ is simply connected and $F$-simple.
Let $S$ be a finite set of places of $F$ that contains all Archimedean places
such that $\sf G$ is isotropic over $S$.
We denote by $O_S$ the ring of $S$-integers in $F$, and consider the arithmetic group
$\Gamma:={\sf G}(O_S)$. For $\gamma\in \Gamma$, we denote by $F_\gamma$ the field
generated by the eigenvalues of $\gamma$.
We shall analyse the Galois groups $\hbox{Gal}(F_\gamma/F)$ for $\gamma\in\Gamma$
with $\gamma$'s indexed by the height function $\hbox{H}$, which is defined by
$$
\hbox{H}(\gamma):=\prod_{v\in S} \hbox{H}_v(\gamma),
$$
where the local heights $\hbox{H}_v$ are 
$$
\hbox{H}_v(\gamma):=\left\{
\begin{tabular}{ll}
$\left(\sum_{i,j} |\gamma_{ij}|_v^2\right)^{1/2}$ & for Archimedean places $v\in S$,\\
$\max_{ij} |\gamma_{ij}|_v$ & for non-Archimedean places $v\in S$.
\end{tabular}
\right.
$$
We set
$$
N_T(\Gamma):=|\{\gamma\in\Gamma:\, \hbox{H}(\gamma)\le T \}|.
$$

Let $F_{\sf G}:=\cap_{\sf T} F_{\sf T}$ where the intersection is taken over
all maximal tori $\sf T$ of $\sf G$ defined over $F$, and $F_{\sf T}$ denotes the splitting
field of the torus $\sf T$. We also denote by $W({\sf G})\simeq N_{\sf G}({\sf T})/C_{\sf G}({\sf T})$
the Weyl group of $\sf G$. Our first result shows that typically the Galois groups
$\hbox{Gal}(F_\gamma/(F_\gamma\cap F_{\sf G}))$ are isomorphic to the Weyl groups $W({\sf G})$.

\begin{thm}\label{th:split}
There exists $\delta>0$ such that
$$
\left|\left\{\gamma\in\Gamma:\,\, 
\begin{tabular}{l}
$\hbox{\rm H}(\gamma)\le T$, $F_\gamma\supset F_{\sf G}$\\
$\hbox{\rm Gal}(F_\gamma/F_{\sf G})\simeq W({\sf G})$
\end{tabular}
\right\}\right|
=N_T(\Gamma)+ O\left(N_T(\Gamma)^{1-\delta}\right).
$$
\end{thm}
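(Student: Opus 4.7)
The strategy is to combine the abstract large sieve of Kowalski with the quantitative lattice point counting and non-concentration theorems of the authors cited in the abstract. The first step is to recast the Galois condition in sieve form. For a regular semisimple $\gamma\in\Gamma$ and an unramified prime $\mathfrak{p}$ of $O_S$ of good reduction, the Frobenius of $k_\mathfrak{p}$ acts on the character lattice of the reduction of the centralising torus ${\sf T}_\gamma$ and determines a conjugacy class $c_\mathfrak{p}(\gamma)$ in $W({\sf G})$; this class is contained in the subgroup $\hbox{Gal}(F_\gamma/F_{\sf G})\hookrightarrow W({\sf G})$. Note that for regular semisimple $\gamma$ the inclusion $F_{\sf G}\subset F_\gamma$ is automatic, since $F_\gamma=F_{{\sf T}_\gamma}$ is one of the fields in the intersection defining $F_{\sf G}$. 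The classical Jordan lemma (a proper subgroup of a finite group cannot meet every conjugacy class) then implies that if $\hbox{Gal}(F_\gamma/F_{\sf G})\ne W({\sf G})$ some conjugacy class $c$ of $W({\sf G})$ is missed by $c_\mathfrak{p}(\gamma)$ for every unramified good $\mathfrak{p}$. Consequently, apart from the non-regular-semisimple locus, the ``bad'' set in $\Gamma_T:=\{\gamma\in\Gamma:\hbox{H}(\gamma)\le T\}$ is contained in the finite union
\[
\bigcup_{c}\cA_c(T,Q),\qquad \cA_c(T,Q)=\{\gamma\in\Gamma_T:c_\mathfrak{p}(\gamma)\ne c\ \text{for all good unramified}\ \mathfrak{p}\ \text{with}\ |\mathfrak{p}|\le Q\},
\]
where $Q=Q(T)$ is a cutoff to be chosen.

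The second step is to apply Kowalski's large sieve to each $\cA_c(T,Q)$ with the sieve maps $\gamma\mapsto c_\mathfrak{p}(\gamma)$ valued in the finite set of conjugacy classes of $W({\sf G})$. This requires two sieve axioms. \emph{Density:} the proportion $\omega_\mathfrak{p}(c)=|\{g\in{\sf G}(k_\mathfrak{p}):c_\mathfrak{p}(g)=c\}|/|{\sf G}(k_\mathfrak{p})|$ satisfies $\omega_\mathfrak{p}(c)=|c|/|W({\sf G})|+O(|\mathfrak{p}|^{-1/2})$ uniformly in $\mathfrak{p}$, which follows from the Lang--Steinberg theorem together with the analysis of conjugacy classes in finite groups of Lie type used by Jouve, Kowalski and Zywina. \emph{Equidistribution:} the reduction map $\pi_{\mathfrak{p}_1\mathfrak{p}_2}:\Gamma_T\to{\sf G}(k_{\mathfrak{p}_1})\times{\sf G}(k_{\mathfrak{p}_2})$ is nearly uniform, with a power-saving error
\[
\bigl|\pi_{\mathfrak{p}_1\mathfrak{p}_2}^{-1}(g_1,g_2)\cap\Gamma_T\bigr|=\frac{N_T(\Gamma)}{|{\sf G}(k_{\mathfrak{p}_1})|\,|{\sf G}(k_{\mathfrak{p}_2})|}+O\bigl(N_T(\Gamma)^{1-\kappa}|\mathfrak{p}_1\mathfrak{p}_2|^A\bigr)
\]
for some $\kappa,A>0$. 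This is precisely the authors' uniform quantitative lattice counting theorem, which uses strong approximation for the simply connected, $F$-simple group ${\sf G}$ together with a spectral gap; the authors' non-concentration phenomenon is then used to absorb the contribution of $\gamma$ lying on proper subvarieties of ${\sf G}$, most importantly the non-regular-semisimple locus where $c_\mathfrak{p}(\gamma)$ is not even defined.

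Assembling these inputs, Kowalski's inequality yields
\[
|\cA_c(T,Q)|\ll\frac{N_T(\Gamma)+N_T(\Gamma)^{1-\kappa}Q^{A'}}{\sum_{|\mathfrak{p}|\le Q}\omega_\mathfrak{p}(c)},
\]
and balancing $Q=T^\eta$ for a suitably small $\eta>0$ produces the desired saving $N_T(\Gamma)^{1-\delta}$ after summing over the finitely many conjugacy classes of $W({\sf G})$. The principal obstacle is the uniform equidistribution axiom: a naive spectral approach on ${\sf G}(F)\backslash{\sf G}(\mathbb A_F)$ degrades as the level $\mathfrak{p}_1\mathfrak{p}_2$ grows, and what makes the sieve effective is the precise combination of the authors' lattice counting theorem with the non-concentration theorem, which jointly rule out anomalous accumulation of $\Gamma_T$ on the positive-codimension subvarieties cut out by the sieve conditions.
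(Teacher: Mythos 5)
Your proposal follows essentially the same route as the paper: reduce to regular semisimple elements via the non-concentration theorem, use Jordan's lemma that a proper subgroup misses a conjugacy class, detect conjugacy classes of $W({\sf G})$ via Frobenius at good primes as in Jouve--Kowalski--Zywina, feed the uniform congruence counting estimate into Kowalski's abstract large sieve as the equidistribution axiom, and balance the sieve parameter against $N_T(\Gamma)$.

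One point is stated too loosely and does need repair. You assert that for an arbitrary unramified good prime $\mathfrak{p}$ of $O_S$, the Frobenius acting on $X({\sf D}_\gamma)$ determines a conjugacy class $c_\mathfrak{p}(\gamma)$ lying in $W({\sf G})$ (and even inside $\hbox{Gal}(F_\gamma/F_{\sf G})$). This is false when ${\sf G}$ is not split over $F$: the Frobenius class in $\hbox{Gal}(F_\gamma/F)$ lands in the larger group $\Pi({\sf G})$, and likewise the reduction ${\sf G}^{(\mathfrak{p})}$ need not be split over ${\bf F}_\mathfrak{p}$, so the finite-field Frobenius class is only a coset of $W({\sf G}^{(\mathfrak{p})})$ in general. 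The paper handles this by fixing a splitting field $E/F$ of ${\sf G}$, working with Frobenius classes for $E_\gamma/E$, and running the sieve only over primes of $F$ that split completely in $E/F$; this set still has positive density by Chebotarev, so $|\mathcal{L}|\gg L/\log L$ and the sieve proceeds unchanged. Without this restriction the Jordan-lemma step fails, since for non-split primes $c_\mathfrak{p}(\gamma)$ need not be a class of $W({\sf G})$ at all. This is an easily repaired gap, but it is exactly the technical device the paper introduces and that your write-up omits.
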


In general, the Galois groups $\hbox{\rm Gal}(F_\gamma/F)$ are typically
isomorphic to a larger group $\Pi({\sf G})$ which we now define.
Let $\sf T$ be a maximal torus of $\sf G$ defined over $F$, and let $X({\sf T})$ be
the character group of $\sf T$ which is a free abelian group of rank $\dim({\sf T})$.
We denote by 
$\Pi({\sf G})$ the subgroup of $\hbox{Aut}(X({\sf T}))$ generated by 
the action of the Weyl group $W({\sf G})$ and the action of the Galois
group $\hbox{Gal}(F_{\sf T}/F)$. We note that the definition of the group $\Pi({\sf G})$
does not depend on a choice of the torus $\sf T$.

\begin{thm}\label{th:general}
There exists $\delta>0$ such that
$$
\left|\left\{\gamma\in\Gamma:\,\, 
\begin{tabular}{c}
$\hbox{\rm H}(\gamma)\le T$\\
$\hbox{\rm Gal}(F_\gamma/F)\simeq \Pi({\sf G})$
\end{tabular}
\right\}\right|
=N_T(\Gamma)+ O\left(N_T(\Gamma)^{1-\delta}\right).
$$
\end{thm}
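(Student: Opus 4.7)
The plan is to apply Kowalski's abstract large sieve to the bad set, following the Frobenius-conjugacy-class strategy developed by Jouve--Kowalski--Zywina in the random-walk setting. The first step converts the Galois condition into a sievable one. For a prime $\goth{p}$ of $O_S$ where ${\sf G}$ has good reduction and the reduction of $\gamma$ mod $\goth{p}$ is regular semisimple, the factorisation of its characteristic polynomial modulo $\goth{p}$, combined with the action of $\hbox{Gal}(F_{{\sf T}}/F)$ on $X({\sf T})$, determines a conjugacy class $C_{\goth{p}}(\gamma) \subset \Pi({\sf G})$ which is the image, under the natural embedding $\hbox{Gal}(F_\gamma/F) \hookrightarrow \Pi({\sf G})$, of the Frobenius class at $\goth{p}$. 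If $\hbox{Gal}(F_\gamma/F)$ embeds as a proper subgroup $H \subsetneq \Pi({\sf G})$, then every $C_{\goth{p}}(\gamma)$ meets $H$, and Jordan's classical lemma guarantees $\bigcup_g gHg^{-1} \neq \Pi({\sf G})$. Since $\Pi({\sf G})$ has only finitely many conjugacy classes of maximal subgroups, one may fix a finite family $\mathcal{C}^*$ of diagnostic conjugacy classes in $\Pi({\sf G})$ such that, up to $\gamma$'s at bad or non-regular primes, the bad set
\[
B_T := \{\gamma \in \Gamma : \hbox{H}(\gamma) \leq T,\ \hbox{Gal}(F_\gamma/F) \not\simeq \Pi({\sf G})\}
\]
is covered by those $\gamma$ whose $C_{\goth{p}}(\gamma)$ avoids $\mathcal{C}^*$ for every good prime $\goth{p}$ with $N\goth{p} \leq L$.

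The second step is the large sieve itself. Define, for each good prime $\goth{p}$, the residue set $\Omega_{\goth{p}} \subset {\sf G}(O_S/\goth{p})$ consisting of elements whose Frobenius-type indicator avoids $\mathcal{C}^*$. Standard conjugacy-class counts in reductive groups over $\mathbb{F}_{\goth{p}}$, combined with the Lang--Weil estimates, yield $|\Omega_{\goth{p}}|/|{\sf G}(O_S/\goth{p})| \leq 1-c$ for an absolute constant $c>0$. Kowalski's sieve inequality then bounds $|B_T|$, up to a negligible additive error, by
\[
\frac{N_T(\Gamma) + \Delta(T,L)}{c\,\pi(L)},
\]
where $\pi(L)$ is the number of good primes of residue norm at most $L$ and $\Delta(T,L)$ quantifies how equidistributed the lattice points of height at most $T$ are across cosets of the principal congruence subgroups $\Gamma(\goth{p}_1)\cap \Gamma(\goth{p}_2)$ for all relevant pairs $(\goth{p}_1,\goth{p}_2)$.

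The third step, which I expect to be the main obstacle, is to feed the large sieve with a suitable bound on $\Delta(T,L)$. The authors' uniform lattice-point counting theorem supplies
\[
|\{\gamma \in \gamma_0\,\Gamma(\goth{q}) : \hbox{H}(\gamma) \leq T\}|
= \frac{N_T(\Gamma)}{[\Gamma:\Gamma(\goth{q})]} + O\bigl(N_T(\Gamma)^{1-\kappa}(N\goth{q})^A\bigr),
\]
uniformly in the modulus $\goth{q}$ and the coset representative $\gamma_0$, for explicit $\kappa,A>0$. Applied with $\goth{q}=\goth{p}_1\goth{p}_2$, this controls $\Delta(T,L)$ by a polynomial in $L$ times $N_T(\Gamma)^{1-\kappa}$. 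The authors' non-concentration estimate for lattice points on proper algebraic subvarieties of ${\sf G}$ absorbs the contributions from bad primes, from $\gamma$ with non-separable characteristic polynomial, and from $\gamma$ with $F_\gamma \not\supset F_{{\sf G}}$, into an error $O(N_T(\Gamma)^{1-\eta})$. Choosing the sieve length $L$ as a small positive power of $N_T(\Gamma)$ and balancing the exponents $\kappa$, $A$, $\eta$ against the $\pi(L)$-gain from summing over primes yields the final power saving $\delta > 0$. The delicate point is that the polynomial loss $(N\goth{q})^A$ constrains $L$ to be only a small power of $N_T(\Gamma)$, so the resulting $\delta$ is small and tightly coupled to the numerics of the three inputs; propagating every uniformity through the double-modulus setup of the large sieve is the principal technical task.
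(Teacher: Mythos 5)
Your proposal follows essentially the same strategy as the paper: reduce to regular semisimple elements via the non-concentration theorem, convert the Galois condition into a sieve condition via Frobenius conjugacy classes at good primes (following Jouve--Kowalski--Zywina's reduction-mod-$\goth{p}$ machinery), invoke Jordan's lemma to detect proper subgroups by a missed conjugacy class, and feed the large sieve with the uniform congruence-coset counting theorem. The one place where the paper is more careful than your sketch is that it does not sieve directly for $\Pi({\sf G})$: it first passes to a finite extension $E/F$ splitting ${\sf G}$, sieves over primes of $E$ lying above primes of $F$ that split completely (so that $Y_{\goth{p}}\simeq Y_{\goth{q}}$ and the Frobenius indicator lands cleanly in $W({\sf G}^{(\goth{p})})\simeq W({\sf G})$), proves that $\phi_\gamma(\hbox{Gal}(E_\gamma/E))=W({\sf G})$ for a density-one set, and then deduces $\hbox{Gal}(F_\gamma/F)\simeq\Pi({\sf G})$ by observing that surjectivity onto the Weyl group over $F_{\sf G}$ forces the full image in $\hbox{Aut}(X({\sf T}))$ to be $\Pi({\sf G})$. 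Your formulation treats the existence of a natural conjugacy-class indicator $C_{\goth{p}}(\gamma)\subset\Pi({\sf G})$ as given, which is true but is precisely the content of the structural lemmas in \cite{jkz} that the paper spells out; this is a gap in exposition rather than in substance, and the rest of the sieve numerics (choosing $L$ as a small power of $N_T(\Gamma)$ and balancing against the polynomial loss in the modulus through $M(\mathcal{L})$) matches the paper.
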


\begin{remark} {\rm 
The exponent $\delta$ can given explicitly, and Theorems \ref{th:split} and \ref{th:general} 
hold with
\begin{equation}\label{eq:delta}
\delta<a(a+[F:\mathbb{Q}]\dim({\sf G}))^{-1}(2n_e(p_S))^{-1}(3\dim({\sf G})+1)^{-1},
\end{equation}
where $a$ is the H\"older exponent of the height balls,
$p_S$ is the integrability exponent of the relevant automorphic representations,
and $n_e(p)$ is the least even integer $\ge p/2$ if $p>2$ and $1$ if $p=2$.
We refer to \cite[Sec.~4]{gn} for this notation.
We note that in many cases we have $a=1$ (see 
\cite[Rem.~4.2]{gn} and \cite[Th.~3.15]{gn_book});
for instance, this is so when $S$ contains only Archimedean places.
Also, when the local height functions $\hbox{H}_v$, $v\in S$, are bi-invariant under a good special subgroup
of ${\sf G}(F_v)$, one can replace $2n_e(p_S)$ by $p_S$ (see \cite[Rem.~4.2]{gn}). 
}\end{remark}

We note that I. Rivin \cite{r} has raised a number of important questions on genericity properties in arithmetic lattices and mapping class groups. The present paper is motivated also by the works  
of F.~Jouve, E.~Kowalski, and D.~Zywina \cite{j,jkz,k}
who studied Galois groups of elements generated by random walks
and have established definitive results in this setting.
We are not aware of previous results about Galois groups
of elements indexed by the height function, a  question that was raised explicitly in \cite{r} and \cite[\S 7]{jkz}.

The proofs of the theorems utilize the abstract
large sieve method developed in Kowalski's book \cite{k}, 
and rely also on the technique of studying Galois groups via
reductions modulo primes that has been developed in great generality in \cite{jkz}.
Our arguments are based on the general counting results for congruence subgroups proved in \cite{gn},  which provide  the crucial spectral estimate 
necessary for the large sieve method to proceed (see equation (\ref{spectral estimate}) below). In addition, the non-concentration phenomenon 
established for subvarieties of semisimple group varieties in \cite{gn2}  is used to immediately reduce the computation of splitting 
fields to regular semisimple elements only, as non-regular elements  have a-priori lower rate of growth.

\section{The large sieve for arithmetic groups}

For a prime ideal $\mathfrak{p}$ of the ring of integers of $F$, 
we denote by ${\sf G}^{(\mathfrak{p})}$
the reduction of $\sf G$ modulo $\mathfrak{p}$. 
For almost all $\mathfrak{p}$, ${\sf G}^{(\mathfrak{p})}$ is a smooth connected 
algebraic group defined over the residue field ${\bf F}_\mathfrak{p}$.
We set $Y_{\mathfrak{p}}:={\sf G}^{(\mathfrak{p})}({\bf F}_\mathfrak{p})$,
and more generally, for a square-free ideal $\mathfrak{a}$, we set
$Y_{\mathfrak{a}}:=\prod_{\mathfrak{p}|\mathfrak{a}}Y_{\mathfrak{p}}$.
When the ideal $\mathfrak{a}$ is coprime to $S$,
we have a well-defined reduction map 
$$
\pi_{\mathfrak{p}}:\Gamma={\sf G}(O_S)\to Y_{\mathfrak{p}}.
$$
Given a family of subsets $\Omega_{\mathfrak{p}}\subset Y_{\mathfrak{p}}$ with $\mathfrak{p}\in\mathcal{L}^*$,
we define the sifted set by
$$
S_T\left(\Gamma,\{\Omega_{\mathfrak{p}}\}_{\mathfrak{p}\in\mathcal{L}^*};\mathcal{L}^*\right)
:=|\{\gamma\in\Gamma:\,\, \hbox{\rm H}(\gamma)\le T,\, 
\pi_{\mathfrak{p}}(\gamma)\notin \Omega_{\mathfrak{p}}\hbox{ for $\mathfrak{p}\in\mathcal{L}^*$}\}|.
$$
A fundamental problem in the sieve theory is to produce an upper estimate on the cardinality of this set.

\begin{prop}\label{p:large}
There exist a finite set $R$ of prime ideals containing $S$ and
constants $C,T_0,\rho>0$, depending only on $\Gamma$,
such that for any choice of 
\begin{itemize}
\item a set $\mathcal{L}^*$ of prime ideals coprime to $R$,
\item a set $\mathcal{L}$ of square-free ideals divisible by only prime ideals in $\mathcal{L}^*$,
\item a family of subsets $\Omega_{\mathfrak{p}}\subset Y_{\mathfrak{p}}$ with
  $\mathfrak{p}\in\mathcal{L}^*$,
\end{itemize}
the following estimate holds
$$
\left|S_T\left(\Gamma,\{\Omega_{\mathfrak{p}}\}_{\mathfrak{p}\in\mathcal{L}^*};\mathcal{L}^*\right)\right|
\le \frac{N_T(\Gamma)+C\, N_T(\Gamma)^{1-\rho}M(\mathcal{L})}{V\left(\{\Omega_{\mathfrak{p}}\}_{\mathfrak{p}\in\mathcal{L}^*}\right)},
$$
for all $T\ge T_0$, where
\begin{align}
M(\mathcal{L})&:=\max_{\mathfrak{a}\in\mathcal{L}} 
\sum_{\mathfrak{b}\in\mathcal{L}}|Y_{[\mathfrak{a},\mathfrak{b}]}|
\cdot |Y_{\mathfrak{a}}|^{1/2}\cdot |Y_{\mathfrak{b}}|^{1/2},\label{eq:m}\\
V\left(\{\Omega_{\mathfrak{p}}\}_{\mathfrak{p}\in\mathcal{L}^*}\right)&:={\sum_{\mathfrak{a}\in\mathcal{L}}}
\prod_{\mathfrak{p}|\mathfrak{a}}\frac{|\Omega_{\mathfrak{p}}|}{|Y_{\mathfrak{p}}|-|\Omega_{\mathfrak{p}}|}. \label{eq:v}
\end{align}
\end{prop}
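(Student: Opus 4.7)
The plan is to derive the inequality by direct application of Kowalski's abstract large sieve \cite{k}. The sieve data consist of the set $\Gamma$ weighted by counting measure on the height ball $\{H \le T\}$, the finite groups $Y_\mathfrak{a}$ for squarefree $\mathfrak{a} \in \mathcal{L}$, and the reduction maps $\pi_\mathfrak{a}$. The dual form of the abstract large sieve reduces the required bound on $|S_T|$ to an $L^2$ inequality
$$
\sum_{\gamma \in \Gamma,\, \hbox{H}(\gamma) \le T} \biggabs{\sum_{\mathfrak{a} \in \mathcal{L}} \sum_{\pi} \alpha(\mathfrak{a}, \pi)\, \chi_\pi(\pi_\mathfrak{a}(\gamma))}^2 \le \Delta(T, \mathcal{L}) \sum_{\mathfrak{a}, \pi} |\alpha(\mathfrak{a}, \pi)|^2,
$$
for arbitrary complex coefficients $\alpha(\mathfrak{a}, \pi)$, where $\pi$ ranges over non-trivial irreducible unitary representations of $Y_\mathfrak{a}$; once this is proved with $\Delta(T, \mathcal{L}) \le N_T(\Gamma) + C\, N_T(\Gamma)^{1-\rho} M(\mathcal{L})$, the duality yields $|S_T| \le \Delta(T,\mathcal{L}) / V(\{\Omega_\mathfrak{p}\})$ as required. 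The first step is therefore to reduce to this $L^2$ statement.

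Expanding the square and interchanging summations produces a bilinear form whose kernel is the twisted character sum $W(\mathfrak{a}, \pi; \mathfrak{b}, \pi') := \sum_{\hbox{H}(\gamma) \le T} \chi_\pi(\pi_\mathfrak{a}(\gamma)) \overline{\chi_{\pi'}(\pi_\mathfrak{b}(\gamma))}$. Since the integrand depends only on $\pi_{[\mathfrak{a}, \mathfrak{b}]}(\gamma)$, this is really a twisted counting sum on $Y_{[\mathfrak{a}, \mathfrak{b}]}$. Into it I plug the uniform congruence counting estimate proved in \cite{gn}---the spectral estimate (\ref{spectral estimate}) referenced in the introduction. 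It supplies the main term $N_T(\Gamma)$ exactly when both characters are trivial, and a power-saving error of the form $O(N_T(\Gamma)^{1-\rho} |Y_{[\mathfrak{a}, \mathfrak{b}]}|)$ for every non-trivial pair, uniformly in the modulus. The finite set $R$ is chosen to absorb the finitely many primes at which the reduction $\pi_\mathfrak{p}$ is not well-defined or at which the spectral input degrades.

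Summing, the main term contributes $N_T(\Gamma) \sum_{\mathfrak{a}, \pi} |\alpha(\mathfrak{a}, \pi)|^2$ by orthogonality of characters, accounting for the leading $N_T(\Gamma)$ in $\Delta(T,\mathcal{L})$. The error term contributes
$$
C\, N_T(\Gamma)^{1-\rho} \sum_{\mathfrak{a}, \mathfrak{b} \in \mathcal{L}} |Y_{[\mathfrak{a},\mathfrak{b}]}| \Biggl(\sum_\pi |\alpha(\mathfrak{a}, \pi)|\Biggr) \Biggl(\sum_{\pi'} |\alpha(\mathfrak{b}, \pi')|\Biggr),
$$
to which Cauchy--Schwarz on the representation variables (bounding the number of irreducible representations of $Y_\mathfrak{a}$ by $|Y_\mathfrak{a}|$) produces the factors $|Y_\mathfrak{a}|^{1/2}$ and $|Y_\mathfrak{b}|^{1/2}$, and extraction of the row-maximum over $\mathfrak{a}$ reduces the resulting bilinear form in $\alpha$ to its diagonal, thereby reproducing precisely the quantity $M(\mathcal{L})$ defined in (\ref{eq:m}). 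The main obstacle will be arranging the spectral input so that the modulus dependence of the error is only the single factor $|Y_{[\mathfrak{a}, \mathfrak{b}]}|$ and no larger power, with the remaining modulus burden absorbed through Cauchy--Schwarz; this sharp dependence is exactly what the congruence counting theorem of \cite{gn} is designed to deliver, and once it is quoted the remainder is formal bookkeeping in Kowalski's framework.
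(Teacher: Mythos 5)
Your proposal follows the same high-level route as the paper: Kowalski's abstract large sieve from \cite{k}, with the uniform congruence counting theorem of \cite{gn} supplying the spectral input that bounds the large sieve constant $\Delta$, and the set $R$ chosen via strong approximation. But two points in the middle of the argument need to be corrected.

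First, you take as test functions the irreducible characters $\chi_\pi$ of the finite groups $Y_{\mathfrak{a}}$. Since the $Y_{\mathfrak{a}}$ are (products of) nonabelian groups of Lie type, the irreducible characters form an orthonormal basis of the space of \emph{class functions} on $Y_{\mathfrak{a}}$, not of all of $L^2(Y_{\mathfrak{a}})$. The Proposition allows arbitrary sifting sets $\Omega_{\mathfrak{p}}\subset Y_{\mathfrak{p}}$, and for the inequality $|S_T|\le \Delta/V$ with the stated $V$ one must expand the indicator functions $\mathbf{1}_{\Omega_{\mathfrak{p}}}$ in a complete orthonormal basis of $L^2(Y_{\mathfrak{p}})$ containing the constant $1$; this is exactly the set-up of \cite[Prop.~2.3, Cor.~2.13]{k}, and it is what the paper does with its basis $\mathcal{B}_{\mathfrak{a}}$. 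Your character sieve is the specialization to conjugacy-invariant sifting sets. That version would still suffice for the proofs of Theorems \ref{th:split} and \ref{th:general}, where the sets $\Omega_{\mathfrak{p}}^{\mathcal{C}}$ are in fact conjugacy-invariant, but it does not prove the Proposition as stated.

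Second, your claimed discrepancy for the twisted sum $W(\mathfrak{a},\pi;\mathfrak{b},\pi')$, namely $O\bigl(N_T(\Gamma)^{1-\rho}|Y_{[\mathfrak{a},\mathfrak{b}]}|\bigr)$ uniformly over nontrivial pairs, is not what \eqref{eq:count} delivers. That estimate controls $\mu_T(\{\pi_{\mathfrak{a}}(\gamma)=y\})$ pointwise in $y$ with error $O(N_T(\Gamma)^{1-\rho})$, and integrating against the test function produces an additional factor measuring its size; the paper carries this as $\|[\phi,\bar\psi]\|_\infty\le |Y_{\mathfrak{a}}|^{1/2}|Y_{\mathfrak{b}}|^{1/2}$. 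In your bookkeeping the factor $|Y_{\mathfrak{a}}|^{1/2}|Y_{\mathfrak{b}}|^{1/2}$ is instead produced by Cauchy--Schwarz over the representation index, using the bound $|\{\pi\}|\le |Y_{\mathfrak{a}}|$. The final expression coincides with $M(\mathcal{L})$, but the two sources are not interchangeable: a careful version of your argument would have to account for both the $\|\cdot\|_\infty$ growth of the test functions \emph{and} the sum over the dual index, and as written the accounting picks up the same factor twice under one name and zero times under the other. The paper sidesteps this by quoting \cite[Cor.~2.13]{k}, which packages the duality/Schur-test reduction to the row maximum together with the correct discrepancy dependence; if you want to do the reduction by hand, you must be explicit about which step contributes the modulus factors and verify that you are not double-counting.
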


\begin{proof}
We use the general version of the large sieve developed in \cite[Ch.~2]{k}.
We equip the spaces $Y_{\mathfrak{a}}$ with the uniform probability measure
and choose an orthonormal basis of $\mathcal{B}_{\mathfrak{a}}$ of $L^2(Y_{\mathfrak{a}})$
that contains the constant function $1$. 
We follow the convention
of \cite{k} and construct
the basis elements of $L^2(Y_{\mathfrak{a}})$ as products of basis elements of $L^2(Y_{\mathfrak{p}})$
with prime ideals $\mathfrak{p}$ dividing $\mathfrak{a}$.

It will be convenient to introduce a measure  $\mu_T=\sum_{\gamma\in\Gamma:\, \hbox{\tiny H}(\gamma)\le T} \delta_\gamma$
on $\Gamma$, where $\delta_\gamma$ denotes the Dirac measure at $\gamma$.
According to the general large sieve inequality (see \cite[Prop.~2.3]{k}), we have the estimate
\begin{equation}\label{large sieve}
\left|S_T\left(\Gamma,\{\Omega_{\mathfrak{p}}\}_{\mathfrak{p}\in\mathcal{L}^*};\mathcal{L}^*\right)\right|
 \le \Delta\cdot V\left(\{\Omega_{\mathfrak{p}}\}_{\mathfrak{p}\in\mathcal{L}^*}\right)^{-1},
\end{equation}
where $\Delta=\Delta(T,\mathcal{L})$ is the large sieve constant,
namely, the smallest number such that
\begin{equation}\label{spectral estimate}
{\sum_{\mathfrak{a}\in\mathcal{L}}}\sum_{\phi\in \mathcal{B}_{\mathfrak{a}}\backslash\{1\}}
\left|\int_\Gamma \alpha(\gamma)\phi(\pi_{\mathfrak{p}}(\gamma))\, d\mu_T(\gamma)\right|^2
\le \Delta \int_\Gamma |\alpha(\gamma)|^2\, d\mu_T(\gamma)
\end{equation}
for all $\alpha\in L^2(\Gamma,\mu_T)$.

For an ideal $\mathfrak{a}$ coprime with $S$,
we set 
$$
\Gamma(\mathfrak{a})=\{\gamma\in\Gamma:\, \gamma=\hbox{Id}\mod \mathfrak{a}\}.
$$
Then by \cite[Th.~5.1]{gn}, there exist $T_0,\delta>0$ such that for 
all ideals $\mathfrak{a}$ of $O_S$, $\gamma_0\in\Gamma$ and $T\ge T_0$, we have
\begin{equation}\label{eq:count}
|\{\gamma\in\gamma_0\Gamma(\mathfrak{a}):\, \hbox{H}(\gamma)\le T\}|=\frac{N_T(\Gamma)}{|\Gamma:\Gamma(\mathfrak{a})|}
+ O\left(N_T(\Gamma)^{1-\rho}\right).
\end{equation}
It follows from the strong approximation property of ${\sf G}$ that excluding a finite
set of primes $R$, we may assume that the reduction map $\pi_{\mathfrak{a}}$
is surjective for all $\mathfrak{a}\in\mathcal{L}$. In particular, this implies that
$Y_{\mathfrak{a}}\simeq \Gamma/\Gamma(\mathfrak{a})$, and we deduce that
for all $\mathfrak{a}\in \mathcal{L}$, $y\in Y_{\mathfrak{a}}$ and $T\ge T_0$,
we have 
\begin{equation}\label{eq:gn}
\mu_T\left(\{\pi_{\mathfrak{a}}(\gamma)=y\}\right)=\frac{N_T(\Gamma)}{|Y_{\mathfrak{a}}|}
+ O\left(N_T(\Gamma)^{1-\rho}\right).
\end{equation}
The implied constant depends only on $\Gamma$.

Given ideals $\mathfrak{a},\mathfrak{b}\in\mathcal{L}$, we denote by $\mathfrak{d}$
their greatest common divisor and by $[\mathfrak{a},\mathfrak{b}]$
their least common multiple. Then
$$
Y_{\mathfrak{a}}\simeq Y_{\mathfrak{a}'}\times Y_{\mathfrak{d}},\quad
Y_{\mathfrak{b}}\simeq Y_{\mathfrak{d}}\times Y_{\mathfrak{b}'},\quad
Y_{[\mathfrak{a},\mathfrak{b}]}\simeq Y_{\mathfrak{a}'}\times Y_{\mathfrak{d}}\times Y_{\mathfrak{b}'}
$$
where we write $\mathfrak{a}=\mathfrak{a}'\mathfrak{d}$ and $\mathfrak{b}=\mathfrak{d}\mathfrak{b}'$.
Then every $\phi\in\mathcal{B}_{\mathfrak{a}}$ and $\psi\in\mathcal{B}_{\mathfrak{b}}$
can be written as
$$
\phi=\phi_1\otimes \phi_0\quad\hbox{and}\quad \psi=\psi_0\otimes \psi_1
$$
for some elements $\phi_1\in \mathcal{B}_{\mathfrak{a}'}$, $\phi_0,\phi_1\in \mathcal{B}_{\mathfrak{d}}$,
$\psi_1\in \mathcal{B}_{\mathfrak{b}'}$. 
Given $\phi\in\mathcal{B}_{\mathfrak{a}}$ and $\psi\in\mathcal{B}_{\mathfrak{b}}$,
we define a function on $Y_{[\mathfrak{a},\mathfrak{b}]}$ by
$$
[\phi,\bar\psi]= \phi_1\otimes (\phi_0\bar\psi_0)\otimes \bar \psi_1.
$$
Now to estimate the large sieve constant $\Delta$, we apply \cite[Cor.~2.13]{k}.
Using \eqref{eq:gn}, we obtain for some $C>0$ and all $T\ge T_0$,
\begin{align*}
\Delta\le N_T(\Gamma)+ C\, N_T(\Gamma)^{1-\rho}
\max_{\mathfrak{a}\in\mathcal{L},\phi\in\mathcal{B}_{\mathfrak{a}}} 
\sum_{\mathfrak{b}\in\mathcal{L}}|Y_{[\mathfrak{a},\mathfrak{b}]}|
\left(\sum_{\psi\in\mathcal{B}_{\mathfrak{b}}} \|[\phi,\bar\psi]\|_\infty\right).
\end{align*}
Since $\|\phi\|_2=\|\psi\|_2=1$, we obtain
$$
\|\phi\|_\infty \le |Y_{\mathfrak{a}}|^{1/2} \left(\frac{1}{|Y_{\mathfrak{a}}|}\sum_{y\in
    Y_{\mathfrak{a}}}
|\phi(y)|^2 \right)^{1/2}=|Y_{\mathfrak{a}}|^{1/2},
$$
and similarly
$$
\|\psi\|_\infty \le |Y_{\mathfrak{b}}|^{1/2}.
$$
Hence,
\begin{align*}
\|[\phi,\bar\psi]\|_\infty&\le \|\phi_1\|_\infty\cdot \|\phi_0\bar\psi_0\|_\infty\cdot \|\psi_1\|_\infty
\le \|\phi_1\|_\infty\cdot \|\phi_0\|_\infty\cdot\|\bar\psi_0\|_\infty\cdot \|\psi_1\|_\infty
\\
&\le \|\phi\|_\infty\cdot \|\psi\|_\infty
\le |Y_{\mathfrak{a}}|^{1/2}\cdot |Y_{\mathfrak{b}}|^{1/2},
\end{align*}
and it follows that
\begin{align*}
\Delta\le N_T(\Gamma)+ C\,N_T(\Gamma)^{1-\rho}
\max_{\mathfrak{a}\in\mathcal{L}} 
\sum_{\mathfrak{b}\in\mathcal{L}}|Y_{[\mathfrak{a},\mathfrak{b}]}|
\cdot |Y_{\mathfrak{a}}|^{1/2}\cdot |Y_{\mathfrak{b}}|^{1/2},
\end{align*}
which completes the proof.
\end{proof}

In conclusion, we note that \eqref{eq:count} (see \cite[Th.~5.1]{gn}) holds for
\begin{equation}\label{eq:rho}
\rho<a(a+[F:\mathbb{Q}]\dim({\sf G}))^{-1}(2n_e(p_S))^{-1}.
\end{equation}
with notation as in \cite[Sec.~4]{gn}. Hence, Proposition \ref{p:large}
holds for this choice of $\rho$ as well.

\section{Proof of the main theorems}

For $\gamma\in \Gamma$, we denote by ${\sf D}_\gamma$ the algebraic group
generated by $\gamma$. Let $\Gamma^*\subset\Gamma$ be the subsets of $\gamma$'s
such that ${\sf D}_\gamma$ is a maximal torus in $\sf G$. In particular,
every element in $\Gamma^*$ is semisimple and regular. By \cite[Lem.~2.5]{jkz},
there exists a regular function $h$ on $\sf G$ defined over $F$ such that
for $\gamma\in \Gamma$, the condition  $h(\gamma)\ne 0$ implies that $\gamma\in \Gamma^*$.
Therefore, applying \cite[Th.~1.8]{gn2} to the variety $\{h=0\}$,
we deduce that for some $\sigma>0$,
\begin{equation}\label{eq:regular}
|\{\gamma\in \Gamma^*:\, \hbox{H}(\gamma)\le T\}|=N_T(\Gamma)+O\left(N_T(\Gamma)^{1-\sigma}\right).
\end{equation}
In fact, \cite[Th.~1.8]{gn2} gives $\sigma<\rho/\dim({\sf G})$ with $\rho$ as in \eqref{eq:rho}.
This shows that it will be sufficient to produce a favourable estimate for the set $\Gamma^*$.
We note that for $\gamma\in \Gamma^*$, the maximal torus ${\sf D}_\gamma$ is split over $F_\gamma$,
and hence $F_\gamma\supset F_{\sf G}$.

Let $\sf T$ be a maximal torus of $\sf G$ defined over $F$.
The Galois group $\hbox{Gal}(F_{\sf T}/F)$ acts faithfully on the character group $X({\sf T})$,
and we denote by 
$$
\phi_{\sf T}:\hbox{Gal}(F_{\sf T}/F)\to \hbox{Aut}(X({\sf T}))
$$
the corresponding injective homomorphism. For $\gamma\in\Gamma^*$, we also use notatation
$$
\phi_{\gamma}:\hbox{Gal}(F_{\gamma}/F)\to \hbox{Aut}(X({\sf D}_\gamma)).
$$
The Weyl group $W({\sf G},{\sf T}):=N_{\sf G}({\sf T})/C_{\sf G}({\sf T})$
also acts on $X({\sf T})$. Let $\Pi({\sf G}, {\sf T})$ be the subgroup of 
$\hbox{Aut}(X({\sf T}))$ generated by $\phi_{\sf T}(\hbox{Gal}(F_{\sf T}/F))$ and
$W({\sf G},{\sf T})$. Using that all maximal tori are conjugate, one can check (see
\cite[Prop.~2.1]{jkz}) that all groups $\Pi({\sf G}, {\sf T})$ and all groups
$W({\sf G},{\sf T})$ are isomorphic. 
Because of this, we use notation  $\Pi({\sf G})$ and $W({\sf G})$.
These isomorphisms are defined uniquely up to 
composition with inner automorphisms, so that the bijections between the conjugacy
classes are canonically defined.

By \cite[Lem.~2.2]{jkz}, for $\gamma\in\Gamma^*$, we have
$$
\phi_{\gamma}(\hbox{Gal}(F_{\gamma}/F_{\sf G}))\subset W({\sf G}).
$$
Therefore, $\hbox{Gal}(F_{\gamma}/F_{\sf G})$ is isomorphic to a subgroup of $W({\sf G})$,
and to prove Theorem \ref{th:split}, it remains to show that `typically' the map
$\phi_\gamma$ is onto. 

Let $E$ be a finite extension of $F$ such that ${\sf G}$ is split over $E$.
We shall show that `typically' $\phi_{\gamma}(\hbox{Gal}(E_{\gamma}/E))$
intersects every conjugacy class of $W({\sf G})$. Then 
$\phi_{\gamma}(\hbox{Gal}(E_{\gamma}/E))^{W({\sf G})}=W({\sf G})$, and comparing cardinalities
we conclude that, in fact, $\phi_{\gamma}(\hbox{Gal}(E_{\gamma}/E))=W({\sf G})$.
Since $E\supset F_{\sf G}$, this implies that 
$\phi_{\gamma}(\hbox{Gal}(F_{\gamma}/F_{\sf G}))=W({\sf G})$ as well.

For a prime ideal $\mathfrak{p}$ of the ring of integers of $E$, we denote
by ${\sf G}^{(\mathfrak{p})}$ the reduction of $\sf G$ modulo $\mathfrak{p}$.
For all but finitely many $\mathfrak{p}$, the group ${\sf G}^{(\mathfrak{p})}$
is geometrically irreducible and split. 
Let $Y_{\mathfrak{p}}={\sf G}^{(\mathfrak{p})}({\bf F}_{\mathfrak{p}})$ and 
$Y_{\mathfrak{p}}^*$ be the subset of regular semisimple elements in $Y_{\mathfrak{p}}$.
Every $g\in Y_{\mathfrak{p}}^*$ is contained in unique maximal torus of ${\sf G}^{(\mathfrak{p})}$,
which we denote by ${\sf D}_g$. As above, for $g\in Y_{\mathfrak{p}}^*$, we have 
a homomorphism
$$
\phi_g:\hbox{Gal}(\overline{\bf F}_{\mathfrak{p}}/{\bf F}_{\mathfrak{p}})\to \hbox{Aut}(X({\sf D}_g)),
$$
and 
$$
\phi_g\left(\hbox{Gal}(\overline{\bf F}_{\mathfrak{p}}/{\bf F}_{\mathfrak{p}})\right)\subset W\left({\sf
    G}^{(\mathfrak{p})}\right)
$$
for all but finitely many $\mathfrak{p}$.
We denote by $\hbox{\rm Frob}_{\mathfrak{p}}$ the conjugacy class
in $\hbox{Gal}(\overline{\bf F}_{\mathfrak{p}}/{\bf F}_{\mathfrak{p}})$
generated by the Frobenius automorphism 
$x\mapsto x^{N\mathfrak{p}}$. For a prime ideal $\mathfrak{p}$ of $E$
which is unramified over $E_\gamma$ we denote by $\hbox{Frob}^{E_\gamma/E}_{\mathfrak{p}}$ the 
Frobenius conjugacy class in $\hbox{Gal}(E_\gamma/E)$.
By \cite[Prop.~3.1]{jkz}, 
there exists a finite set $R$ of prime ideals $\mathfrak{p}$
and a nonzero regular function $h$ on ${\sf G}$ defined over $E$ such that 
for all $\mathfrak{p}\notin R$ and every $\gamma\in \Gamma^*$
satisfying $h(\gamma)\ne 0\mod \mathfrak{p}$, we have
\begin{itemize}
\item $\pi_{\mathfrak{p}}(\gamma)\in Y_{\mathfrak{p}}^*$,
\item $\mathfrak{p}$ is unramified in $F_\gamma$,
\item $W({\sf G})\simeq W({\sf G}^{(\mathfrak{p})})$, and 
there is a canonical bijection between the sets of conjugacy classes
in $W({\sf G})$ and $W({\sf G}^{(\mathfrak{p})})$ such that 
the conjugacy classes $\phi_\gamma\left(\hbox{\rm Frob}^{E_\gamma/E}_{\mathfrak{p}}\right)$
and  $\phi_{\pi_{\mathfrak{p}}(\gamma)}\left(\hbox{\rm Frob}_{\mathfrak{p}}\right)$
correspond to each other.
\end{itemize}
Moreover, enlarging $R$ if necessary, we may assume that 
for $\mathfrak{p}\notin R$, $h\ne 0\mod\mathfrak{p}$,
and for prime ideals $\mathfrak{q}$ of $F$ dividing $\mathfrak{p}$,
${\sf G}^{(\mathfrak{p})}\simeq {\sf G}^{(\mathfrak{q})}$ and Proposition \ref{p:large} applies.

Now we apply Proposition \ref{p:large} with $\mathcal{L}=\mathcal{L}^*$ being the set of prime
ideals $\mathfrak{p}$
which are not in $R$,
split completely in the extension $E/F$,
 and satisfy $N\mathfrak{p}\le L$ for a parameter $L\ge 2$ that will be chosen later. 
We note that by the Chebotarev density theorem, $|\mathcal{L}|\gg \frac{L}{\log L}$.
The assumption that a prime ideal $\mathfrak{p}\in\mathcal{L}$ splits completely guarantees that
for every prime ideal $\mathfrak{q}$ of $F$ that divides $\mathfrak{p}$, 
we have $Y_{\mathfrak{p}}\simeq Y_{\mathfrak{q}}$, and hence Proposition \ref{p:large}
applies to the maps $\pi_{\mathfrak{p}}:\Gamma\to Y_{\mathfrak{p}}$.
We fix a conjugacy class $\mathcal{C}\subset W({\sf G})\simeq W({\sf G}^{(\mathfrak{p})})$
and for $\mathfrak{p}\in \mathcal{L}$ consider a set
$$
\Omega_{\mathfrak{p}}^{\mathcal{C}}=Y_{\mathfrak{p}}
\backslash 
\left\{g\in Y_{\mathfrak{p}}^*:\,\, h(g)\ne  0,\, 
\phi_{g}(\hbox{\rm Frob}_{\mathfrak{p}})=\mathcal{C}\right\}.
$$
In order to estimate 
$S_T\left(\Gamma,\left\{\Omega_p^{\mathcal{C}}\right\}_{\mathfrak{p}\in\mathcal{L}};\mathcal{L}\right)$,
we need to establish a lower bound for
$V\left(\left\{\Omega_p^{\mathcal{C}}\right\}_{\mathfrak{p}\in\mathcal{L}}\right)$
and an upper bound for $M(\mathcal{L})$.

Since $\{h=0\}$ is a subvariety of ${\sf G}^{(\mathfrak{p})}$ with smaller dimension, it follows that
$$
\left|\left\{g\in {\sf G}^{(\mathfrak{p})}({\bf F}_{\mathfrak{p}}):\,\, h(g)=  0\right\}\right|
\ll |{\bf F}_{\mathfrak{p}}|^{\dim({\sf G})-1}\ll 
|Y_{\mathfrak{p}}|/|{\bf F}_{\mathfrak{p}}|.
$$
Also, by \cite[Prop.~4.1]{jkz},
$$
|\{ g\in Y_{\mathfrak{p}}^*:\,
\phi_g(\hbox{\rm Frob}_{\mathfrak{p}})=\mathcal{C}\}|
=\frac{|C|}{|W({\sf G})|} |Y_{\mathfrak{p}}|\left(1+O\left(|{\bf
  F}_{\mathfrak{p}}|^{-1}\right)\right).
$$
Therefore,
$$
\frac{|\Omega_{\mathfrak{p}}^{\mathcal{C}}|}{\left|Y_{\mathfrak{p}}\right|}
=\frac{|W({\sf G})-\mathcal{C}|}{|W({\sf G})|} +O\left(|{\bf
  F}_{\mathfrak{p}}|^{-1}\right),
$$
and it follows that
\begin{equation}\label{eq:v_est}
V\left(\left\{\Omega_p^{\mathcal{C}}\right\}_{\mathfrak{p}\in\mathcal{L}}\right)\ge \sum_{\mathfrak{p}\in\mathcal{L}} \frac{|\Omega_{\mathfrak{p}}^{\mathcal{C}}|}
{|Y_{\mathfrak{p}}|}\gg |\mathcal{L}|\gg
\frac{L}{\log L}.
\end{equation}

Next we estimate $M(\mathcal{L})$. For $\mathfrak{p},\mathfrak{q}\in\mathcal{L}$, we have
\begin{align*}
\left|Y_{\mathfrak{p}}\right|&\ll |{\bf F}_{\mathfrak{p}}|^{\dim({\sf G})}\le L^{\dim({\sf G})},\\
\left|Y_{[\mathfrak{p},\mathfrak{q}]}\right|&\le \left|Y_{\mathfrak{p}}\right|\cdot \left|Y_{\mathfrak{q}}\right|
\ll L^{2\dim({\sf G})},
\end{align*}
and it follows that
\begin{equation}\label{eq:m_est}
M(\mathcal{L})\le L^{3\dim({\sf G})} |\mathcal{L}|
\ll \frac{L^{3\dim({\sf G})+1}}{\log L}.
\end{equation}
Now Proposition \ref{p:large}, together with \eqref{eq:v_est} and \eqref{eq:m_est}, implies that
$$
S_T\left(\Gamma,\left\{\Omega_p^{\mathcal{C}}\right\}_{\mathfrak{p}\in\mathcal{L}};\mathcal{L}\right)\ll \left(N_T(\Gamma)+N_T(\Gamma)^{1-\rho} \frac{L^{3\dim({\sf G})+1}}{\log L}\right)
\frac{\log L}{L}.
$$
Taking $L=N_T(\Gamma)^{\rho/(3\dim({\sf G})+1)}$, we deduce that for $\delta<\rho/(3\dim({\sf G})+1)$,
we have
$$
S_T\left(\Gamma,\left\{\Omega_p^{\mathcal{C}}\right\}_{\mathfrak{p}\in\mathcal{L}};\mathcal{L}\right)
\ll  N_T(\Gamma)^{1-\delta}.
$$
Combining this estimate with \eqref{eq:regular}, we deduce that
$$
\left|\left\{\gamma\in\Gamma^*:\,\, 
\begin{tabular}{c}
$\hbox{\rm H}(\gamma)\le T$\\
$\exists\,\mathfrak{p}:\, \phi_\gamma\left(\hbox{\rm Frob}^{E_\gamma/E}_{\mathfrak{p}}\right)=\mathcal{C}$
\end{tabular}
\right\}\right|
=N_T(\Gamma)+O(N_T(\Gamma)^{1-\delta}),
$$
since $\delta<\sigma$.
This estimate holds for all cosets $\mathcal{C}$ of the Weyl group $W({\sf G})$.
Therefore,
$$
\left|\left\{\gamma\in\Gamma^*:\,\, 
\begin{tabular}{c}
$\hbox{\rm H}(\gamma)\le T$\\
$\forall\,\mathcal{C}:\, \phi_\gamma(\hbox{\rm Gal}(E_\gamma/E))\cap\mathcal{C}\ne \emptyset$
\end{tabular}
\right\}\right|
=N_T(\Gamma)+O(N_T(\Gamma)^{1-\delta}).
$$
As it was remarked above, this implies that
$$
\left|\left\{\gamma\in\Gamma^*:\,\, 
\begin{tabular}{c}
$\hbox{\rm H}(\gamma)\le T$\\
$\hbox{\rm Gal}(E_\gamma/E)\simeq W({\sf G})$
\end{tabular}
\right\}\right|
=N_T(\Gamma)+O(N_T(\Gamma)^{1-\delta}),
$$
and
$$
\left|\left\{\gamma\in\Gamma^*:\,\, 
\begin{tabular}{c}
$\hbox{\rm H}(\gamma)\le T$\\
$\hbox{\rm Gal}(F_\gamma/F_{\sf G})\simeq W({\sf G})$
\end{tabular}
\right\}\right|
=N_T(\Gamma)+O(N_T(\Gamma)^{1-\delta}).
$$
Now Theorem \ref{th:split} follows from \eqref{eq:regular}.

To prove Theorem \ref{th:general}, we observe that if for $\gamma\in\Gamma^*$,
we have 
$$
\phi_\gamma(\hbox{\rm Gal}(F_\gamma/F_{\sf G}))=W({\sf G}),
$$
then
$$
\phi_\gamma(\hbox{\rm Gal}(F_\gamma/F))=\Pi({\sf G}),
$$
and $\phi_\gamma$ defines an isomorphism $\hbox{\rm Gal}(F_\gamma/F)\simeq\Pi({\sf G})$.
Therefore, it follows from the above argument that
$$
\left|\left\{\gamma\in\Gamma^*:\,\, 
\begin{tabular}{c}
$\hbox{\rm H}(\gamma)\le T$\\
$\hbox{\rm Gal}(F_\gamma/F)\simeq \Pi({\sf G})$
\end{tabular}
\right\}\right|
=N_T(\Gamma)+O(N_T(\Gamma)^{1-\delta}),
$$
and finally Theorem \ref{th:general} follows from \eqref{eq:regular}.
The estimate \eqref{eq:delta} on $\delta$ follows from \eqref{eq:rho}.

%
%\section{Comments}
%\begin{enumerate}
%
%
%\item p. 1 line -8, -9 : If the height is bi-$K$-invariant, it the $L^p$-integrability estimate 
%that comes in directly, and the $n_e(P_S)$ is superfluous.
%\item p. 2 line -9 : same thing : it seems the estimate can be improved and quite the notation $n_e(P_S)$ eliminated. 
%\item p. 2 line -9 : For bi-$K$-invariant heights, isn't the Holder exponent $a$ nearly always $1$ ? It is a very technical condition which 
%is not very natural in this algebraic framework, so it's definitely worth commenting explicitly that $a=1$ is really a very common phenomenon.
%\item p. 3 top : Explicit question by Kowalski. More about other sieve problem later. 
%\item p. 3 line -7 : Support in $\cL^\ast$, not in $\cL$.  Is the word "support" completely standard or should we say explicitly what this means ?
%\item p. 4, line -1 : written as a finite linear combination
%\item  p. 5 line 9 : why is this estimate true. 
%\item p. 5 line -1 : the notation $\phi_T$ and later on, starting on p. 6 line 2, the notation $\phi_\gamma$ are not so great, since in the previous 
%section $\phi$ was the basis element.  On p. 6 line -11 we also find the notation $\phi_g$ where $g$ is now in quotient group. This accounts for 
%four distict uses of $\phi$ is less than two pages.
% 
%\end{enumerate}
%

\end{document}